\newtheorem{theorem}{Theorem}[section]
\newtheorem{definition}[theorem]{Definition}
\newtheorem{proposition}[theorem]{Proposition}
\newtheorem{corollary}[theorem]{Corollary}
\newtheorem{lemma}[theorem]{Lemma}
\newtheorem{conjecture}[theorem]{Conjecture}
\begin{document}

\title{Small circulant complex Hadamard matrices of Butson type}


\author{Gaurush Hiranandani}
\address{GH: Senior Undergraduate, Department of Mathematics and Statistics, IIT Kanpur, Kanpur, Uttar Pradesh, India - 208016.  {\tt gaurushh@iitk.ac.in}}


\author{Jean-Marc Schlenker}
\address{JMS: University of Luxembourg, Campus Kirchberg, Mathematics Research Unit, BLG, 6 rue Richard Coudenhove-Kalergi, L-1359 Luxembourg. {\tt jean-marc.schlenker@math.uni.lu}}

\subjclass[2000]{05B20}
\keywords{Circulant Hadamard matrix}

\begin{abstract}
We study the circulant complex Hadamard matrices of order $n$ whose entries are $l$-th roots of unity. For $n=l$ prime we prove that the only such matrix, up to equivalence, is the Fourier matrix, while for $n=p+q,l=pq$ with $p,q$ distinct primes there is no such matrix. We then provide a list of equivalence classes of such matrices, for small values of $n,l$.
\end{abstract}

\maketitle


\section{Introduction and results}

A complex Hadamard matrix of order $n$ is a matrix $H$ having as entries complex numbers of modulus $1$, such that $H/\sqrt{n}$ is unitary. Among complex Hadamard matrices, those with all entries roots of unity are said to be of Butson type. The basic example here is the Fourier matrix, $F_n=(w^{ij})_{ij}$ with $w=e^{2\pi i/n}$:
$$F_n=\begin{pmatrix}
1&1&1&\cdots&1\\
1&w&w^2&\cdots&w^{n-1}\\
\cdots&\cdots&\cdots&\cdots&\cdots\\
1&w^{n-1}&w^{2(n-1)}&\cdots&w^{(n-1)^2}
\end{pmatrix}.$$

We denote by $C_n(l)$ the set of complex Hadamard matrices of order $n$, with all entries being $l$-th roots of unity. As a first example here, observe that we have $F_n\in C_n(n)$.

In general, the complex Hadamard matrices are known to have applications to a wide array of questions, ranging from electrical engineering to von Neumann algebras and quantum physics, and the Butson ones are known to be at the ``core'' of the theory. For further details here, we recommend the excellent survey article by Tadej and \.Zyczkowski \cite{tzy}, and the subsequent website, made and maintained in collaboration with Bruzda\footnote{{\tt http://chaos.if.uj.edu/pl/$\widetilde{\ }$karol/hadamard}}. 

We are more specifically interested in understanding the complex Hadamard matrices of Butson type which are {\it circulant}, that is, of the form $(H_{ij})_{i,j=1,\ldots,n}$ with $H_{ij}$ depending only on $i-j$. We denote by $C^{circ}_n(l)$ the set  of circulant matrices in $C_n(l)$, and by $C^{circ,1}_n(l)$ the set of matrices from $C^{circ}_n(l)$ with 1 on the diagonal.

Regarding the motivations for the study of such matrices, let us mention: (1) their key importance for the construction of complex Hadamard matrices, see \cite{tzy}, (2) their relation with cyclic $n$-roots and their applications, see \cite{bjo}, and (3) their relation with the Circulant Hadamard Conjecture, a beautiful mathematical problem, to be explained now.

In the real case -- that is, for $l=2$ -- only one example is known, at $n=4$, namely the $4\times 4$ matrix with 
$-1$ on the diagonal and $1$ elsewhere.
For larger values of $n$ it is conjectured that there is no example:

\begin{conjecture}[Circulant Hadamard Conjecture] \label{chc}
$C^{circ}_n(2)=\emptyset$ for all $n>4$.
\end{conjecture}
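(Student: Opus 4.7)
The plan: Conjecture~\ref{chc} is a famous long-standing open problem, so rather than pretend to a real proof I would lay out the classical attack, which already yields deep partial results, and flag where it halts. First, encode the circulant Hadamard condition spectrally. Writing the first row as $(a_0,\ldots,a_{n-1})$ with $a_i\in\{\pm 1\}$, the equation $HH^{T}=nI$ becomes the autocorrelation identities $\sum_i a_i a_{i+k}=n\,\delta_{k,0}$, equivalently $|P(\omega)|^2=n$ for every $n$-th root of unity $\omega$, where $P(x)=\sum_i a_i x^i \in \mathbb{Z}[x]/(x^n-1)$. Setting $\omega=1$ gives $P(1)^2=n$, so $n$ is a perfect square, and a parity count modulo $4$ forces $n=4m^2$.

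Second, pass to algebraic number theory along the lines of Turyn. For each prime divisor $p$ of $m$, take $\omega$ a primitive $p^k$-th root of unity and study $P(\omega)\in\mathbb{Z}[\omega]$ as an algebraic integer of absolute value $\sqrt{n}$. Computing its norm down to $\mathbb{Z}$ and using that prime $p$ is ramified in $\mathbb{Z}[\omega]$ yields strong divisibility constraints on $P(\omega)$, and combining these with Galois-averaged sums produces congruences that many $n$ fail. This already eliminates enormous families, most famously under the \emph{self-conjugate} hypothesis (that for every prime $p\mid n$ there is an integer $j$ with $p^j\equiv -1\pmod{n'}$, where $n'$ is the $p$-free part of $n$).

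Third -- and this is where the main obstacle sits -- the algebraic constraints above are local in nature: they restrict $P(\omega)$ one prime at a time, but there is no known way to glue them into a global contradiction when $n$ fails to be self-conjugate. To genuinely settle the conjecture one would need either (a)~a new $p$-adic or Galois-module obstruction that detects the global interaction between the prime factors of $n=4m^2$, or (b)~a direct combinatorial/structural classification of $\pm 1$ sequences satisfying the autocorrelation identities. I would attempt (a) first, because the present excerpt is about Butson-type matrices, where replacing $\{\pm 1\}$ by $l$-th roots of unity enlarges the number ring and sometimes creates extra obstructions one can then try to specialize back to $l=2$; but I do not expect this to succeed, and I would flag the general non-self-conjugate case as the true barrier.
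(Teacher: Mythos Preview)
You have correctly identified that Conjecture~\ref{chc} is an open problem, and indeed the paper offers no proof of it either: it is stated explicitly as a conjecture, not a theorem, and the remainder of the paper is devoted to partial and related results (uniqueness at $n=l=p$ prime, nonexistence in $C^{circ}_{p+q}(pq)$, and computational classification). So there is no ``paper's proof'' to compare against, and your decision not to fabricate one is exactly right.

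That said, the angle you sketch and the angle the paper pursues are genuinely different, and it is worth noting the contrast. Your outline follows the classical Turyn line: perfect-square and $4m^2$ reductions, then algebraic-integer constraints on $P(\omega)$ in cyclotomic rings, with the self-conjugacy hypothesis as the tractable case and the non-self-conjugate case flagged as the obstruction. The paper instead reformulates the conjecture via a Fourier duality (Theorem~3.2): the map $x\mapsto Fx$ on first rows induces a bijection $C_n^{circ}(2)\simeq C_n^{circ,sa}(\infty)$, so the CHC becomes the nonexistence of \emph{Hermitian} circulant complex Hadamard matrices for $n>4$. The paper then proposes attacking this through the Butson hierarchy $C_n^{circ,sa}(l)$ for finite $l$ (Problem~4.1), recording the known cases $l=2$ (Brualdi--Newman) and $l=4$ (Craigen--Kharaghani) in Theorem~4.2. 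Your approach buys access to deep cyclotomic machinery and the large body of existing partial results; the paper's duality buys a reformulation in which one can hope to exploit the extra structure of self-adjointness and to interpolate through intermediate Butson classes. Neither, of course, currently closes the gap.
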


For larger values of $l$, however, things are different and, to a large extent, mysterious. Our main goal here is to make some progress on the understanding of the values of $n,l$ which allow the existence of  circulant Hadamard matrices of Butson type. 

It is known since \cite{bac}, \cite{fau} that $F_n$ is equivalent to a circulant matrix, and it follows that $C^{circ}_n(n)\neq \emptyset$. We will prove below that when $n$ is prime, the Fourier matrix (in circulant form) is the only example, that is, any matrix in $C^{circ}_n(n)$ is equivalent to the Fourier matrix.

\begin{theorem} \label{tm:uniqueness-p}
Any matrix in $C^{circ}_p(p)$ is equivalent to the Fourier matrix $F_p$.
\end{theorem}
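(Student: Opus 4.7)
The plan is to translate the Hadamard orthogonality relations into a combinatorial condition on the exponents, recognize that condition as the defining property of a \emph{planar function} over $\mathbb{F}_p$, invoke the classification of such functions, and then reduce the resulting matrix to $F_p$ by standard Hadamard equivalences. Concretely, I would first write $H \in C^{circ}_p(p)$ as $H_{ij} = \omega^{a_{i-j}}$ with $\omega = e^{2\pi i/p}$ and $a_l \in \mathbb{Z}/p\mathbb{Z}$. Unfolding $HH^* = pI$ yields, for each nonzero $m \in \mathbb{Z}/p\mathbb{Z}$, the identity $\sum_{l}\omega^{a_l - a_{l-m}} = 0$. Since $p$ is prime, the minimal polynomial of $\omega$ over $\mathbb{Q}$ is the full cyclotomic polynomial $1+X+\cdots+X^{p-1}$; hence any vanishing sum of $p$ $p$-th roots of unity with nonnegative integer multiplicities summing to $p$ must consist of each root $\omega^c$ appearing exactly once. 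Applied to the identity above, this shows that for every $m \neq 0$ the map $l \mapsto a_l - a_{l-m}\pmod p$ is a bijection on $\mathbb{Z}/p\mathbb{Z}$.

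This is precisely the defining property of a planar function $f \colon \mathbb{F}_p \to \mathbb{F}_p$, $f(l) = a_l$. I would then appeal to the classical theorem of Gluck, R\'onyai--Sz\H{o}nyi and Hiramine which asserts that every planar function on $\mathbb{F}_p$ (for $p$ an odd prime) is a quadratic polynomial: there exist $\alpha \in \mathbb{F}_p^\ast$ and $\beta,\gamma \in \mathbb{F}_p$ with $a_l = \alpha l^2 + \beta l + \gamma$. This is the step I expect to be the main obstacle: the planar-function classification is non-trivial, and the Hadamard setup does not seem to give anything strictly stronger than the planar condition, so any self-contained treatment must recycle at least a portion of this classification. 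The case $p = 2$ is vacuous, since a direct $2 \times 2$ calculation shows that $C^{circ}_2(2) = \emptyset$.

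Finally, I would check that any quadratic $f$ produces a matrix equivalent to $F_p$. Factoring $H_{ij} = \omega^\gamma \cdot \omega^{\beta(i-j)} \cdot \omega^{\alpha(i-j)^2}$, the global constant $\omega^\gamma$ disappears under scalar scaling, and the linear factor $\omega^{\beta(i-j)} = \omega^{\beta i} \cdot \omega^{-\beta j}$ is absorbed by left/right multiplication with the diagonal unitaries $\mathrm{diag}(\omega^{-\beta i})$ and $\mathrm{diag}(\omega^{\beta j})$. So up to equivalence one may assume $H_{ij} = \omega^{\alpha(i-j)^2}$. Expanding $(i-j)^2 = i^2 - 2ij + j^2$ yields the factorization $H = D F' D$, where $D = \mathrm{diag}(\omega^{\alpha i^2})$ is diagonal unitary and $F'_{ij} = \omega^{-2\alpha ij}$. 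Since $p$ is odd, $-2\alpha \in \mathbb{F}_p^\ast$, and $F'$ is obtained from $F_p$ by the column permutation $j \mapsto -2\alpha j \pmod p$; hence $H$ is equivalent to $F_p$, as desired.
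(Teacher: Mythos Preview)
Your proposal is correct and follows essentially the same route as the paper: translate the orthogonality relations via the cyclotomic argument into the planar-function condition on the exponent sequence, invoke the Gluck/R\'onyai--Sz\H{o}nyi/Hiramine classification to force the exponents to be quadratic, and then strip off the constant, linear, and square terms by diagonal scalings and a column permutation to reach $F_p$. The paper's presentation of the final reduction (Theorem~5.3) is written additively on the exponent matrix rather than multiplicatively on $H$, but the computation is the same.
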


Here, as in the rest of the paper, the equivalence relation which is considered among circulant matrices is the cyclic permutation of rows and columns, and multiplication of all entries by a constant. However, since the Fourier matrix is not circulant, we need a wider notion of equivalence in the statement of this theorem. More precisely, we use the standard notion of equivalence for (non-circulant) complex Hadamard matrices, that is, permutation of the rows and columns, and multiplication of each row and each column by a constant.

Our second result is a non-existence statement for circulant Butson matrices based on two different prime numbers:

\begin{theorem} \label{tm:pq}
Let $p,q\geq 5$ be two distinct primes. Then $C^{circ}_{p+q}(pq)=\emptyset$.
\end{theorem}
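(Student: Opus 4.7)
My plan is to combine the circulant Hadamard orthogonality with the Lam--Leung theorem on vanishing sums of roots of unity. Writing the first row of a putative $H \in C^{circ}_{p+q}(pq)$ as $(\zeta^{c_0},\dots,\zeta^{c_{n-1}})$ with $\zeta = e^{2\pi i/pq}$, $n = p+q$, and $c_i \in \mathbb{Z}/pq$, the Hadamard condition becomes $\sum_{i=0}^{n-1} \zeta^{c_i - c_{i+k}} = 0$ for every $k \in \mathbb{Z}/n \setminus \{0\}$. Each such identity is a vanishing sum of exactly $n = p+q$ elements in the group of $pq$-th roots of unity. By Lam--Leung, such a sum has length of the form $ap + bq$ with $a,b \in \mathbb{Z}_{\geq 0}$, and the multiset of summands decomposes into $a$ cosets of the order-$p$ subgroup together with $b$ cosets of the order-$q$ subgroup. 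For distinct primes $p, q$ the equation $p+q = ap+bq$ uniquely forces $a = b = 1$. So for each $k \neq 0$, the multiset $D_k := \{c_i - c_{i+k}\}_{i \in \mathbb{Z}/n}$ inside $\mathbb{Z}/pq$ consists of one ``$p$-orbit'' (coset of the order-$p$ subgroup $q\mathbb{Z}/pq\mathbb{Z}$) plus one ``$q$-orbit.''

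The next step is to pin down which orbits these are. Via the CRT isomorphism $\mathbb{Z}/pq \cong \mathbb{Z}/p \times \mathbb{Z}/q$, a $p$-orbit is a row $\mathbb{Z}/p \times \{y\}$ and a $q$-orbit is a column $\{x\} \times \mathbb{Z}/q$; these sum in CRT coordinates to $(0, py)$ and $(qx, 0)$ respectively. On the other hand, the telescoping identity gives $\sum_i (c_i - c_{i+k}) \equiv 0 \pmod{pq}$. Matching the two forces $qx \equiv 0 \pmod p$ and $py \equiv 0 \pmod q$, hence $x = y = 0$. So $D_k$ is contained in the ``axes'' $(\mathbb{Z}/p \times \{0\}) \cup (\{0\} \times \mathbb{Z}/q)$; equivalently, for every $i \neq j$ in $\mathbb{Z}/n$, either $c_i \equiv c_j \pmod p$ or $c_i \equiv c_j \pmod q$.

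Setting $\alpha_i = c_i \bmod p$ and $\beta_i = c_i \bmod q$, a short case analysis on any three distinct points shows that a subset of $\mathbb{Z}/p \times \mathbb{Z}/q$ in which every two elements share a coordinate must lie in a single row or a single column. Without loss of generality $\beta_i$ is constant, so after dividing $H$ by a suitable $q$-th root of unity the first row takes values in the $p$-th roots of unity, and $H$ becomes equivalent to an element of $C^{circ}_{p+q}(p)$. But a nonempty vanishing sum of $p$-th roots of unity has length divisible by $p$ (the prime-modulus case of Lam--Leung), while $p \nmid p+q$ since $p \neq q$. This contradiction gives $C^{circ}_{p+q}(pq) = \emptyset$.

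The main non-routine step is the second paragraph: once Lam--Leung has supplied the coarse orbit structure, the telescoping identity combined with CRT is what locks both orbits through the origin, and this extra rigidity is what makes the third-paragraph combinatorial collapse to a single row or column possible. Everything else is a direct application of Lam--Leung, once at level $pq$ and once at level $p$.
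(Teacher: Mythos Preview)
Your proof is correct and follows a genuinely different route from the paper's. Both arguments begin identically: the row-orthogonality for a circulant $H\in C^{circ}_{p+q}(pq)$ gives, for each $k\neq 0$, a vanishing sum of $p+q$ many $pq$-th roots of unity, which by the two-prime structure theorem for such sums must split as exactly one $p$-cycle plus one $q$-cycle (this is the paper's Lemma~6.1).

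From there the paths diverge. You exploit the circulant structure a second time via the telescoping identity $\sum_i(c_i-c_{i+k})=0$ in $\mathbb{Z}/pq\mathbb{Z}$, which forces the $p$-cycle and the $q$-cycle to be the \emph{subgroups} $q\mathbb{Z}/pq\mathbb{Z}$ and $p\mathbb{Z}/pq\mathbb{Z}$ themselves; hence every difference $c_i-c_j$ lies on the CRT axes. The three-point ``shared coordinate'' lemma then collapses all $c_i$ to a single row or column, reducing to $C^{circ}_{p+q}(p)$ (or $(q)$), which is empty since $p\nmid p+q$. The paper never pins down which cosets occur; instead it develops a chain of combinatorial lemmas (6.2--6.8) comparing the index sets $P_{ij},Q_{ij}$ across different row pairs, establishes $\#(P_{01}\cap P_{i,i+1})\ge p-3$ for all $i$, and finishes with a Fourier-analytic density bound (Proposition~6.9) yielding $(p-1)^2\ge(p-3)^2(p+q)$, which fails for $p,q\ge 5$.

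Your argument is shorter and more elementary, and in fact it already works for any two distinct \emph{odd} primes $p,q\ge 3$ (the parity is used when you compute the coset sums modulo $p$ and $q$), whereas the paper's final inequality genuinely needs $p,q\ge 5$. The price you pay is that the circulant hypothesis enters essentially at the telescoping step, while the paper's Lemmas~6.1--6.8 are valid for arbitrary $H\in C_{p+q}(pq)$ and only the very last density step invokes circulance---which is why the paper is able to raise Question~1.3 about the non-circulant case.
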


A large part of the proof is actually valid for non-circulant matrices. 
To put this result in perspective, recall the following result \cite[Theorem 7.9]{bbs}. Assume $C_n(l)\neq\emptyset$.
\begin{enumerate}
\item If $n=p+2$ with $p\geq 3$ prime, then $l\neq 2p^b$.
\item If $n=2q$ with $p>q\geq 3$ primes, then $l\neq 2^ap^b$.
\end{enumerate}
It follows from the first result that for $p\geq 3$ prime,  $C_{p+2}(2p)=\emptyset$, 
so $C^{circ}_{p+2}(2p)=\emptyset$.  However it remains unclear whether $C_{p+3}(3p)=\emptyset$ for $p\geq 5$ prime. 
It also seems natural to ask whether, for $p,q$ be two distinct primes, $C_{p+q}(pq)=\emptyset$.

The last main contribution of the present paper is an ``experimental'' study 
of the set of equivalence classes of matrices in $C^{circ}_n(l)$, for small
values of $n,l$. See Section \ref{sc:classification}. This study was done using a computer program, designed to find all possible equivalence classes of circulant matrices which can contain a matrix of Butson type. At the counting level, the main results are presented in Table \ref{tb:table}.

The classification results obtained by this experimental approach lead to a number of quite natural questions, for instance the following ones:
\begin{itemize}
\item What are the obstructions to the existence of a matrix in  $C^{circ}_n(l)$ for various values of $n,l$? In the non-circulant case, there are a few obstructions known, which explain quite well, for small $n,l$, when $C_n(l)=\emptyset$. In the circulant  case, however, few obstructions are known beyond those known for general (non-circulant) Butson matrices. 

\item For $p$ prime and $k\in {\mathbb N}$, does $C^{circ}_p(kp)$ contain any matrix  other than those obtained from the Fourier matrix $F_p$?

\item What is the number of elements of $C^{circ}_4(l)$,  $C^{circ}_8(l)$,  $C^{circ}_9(l)$, with $l\in {\mathbb N}$ arbitrary? For instance, what are the next terms in the sequence defined as the number of equivalence classes in $C_9^{circ}(3k)$, with first terms $6, 24, 62, 108, 172$?
\end{itemize}
Finding answers to these questions might ultimately lead to progress on Conjecture \ref{chc}.

\bigskip

\noindent \textbf{Acknowledgements.} 

The authors are particularly grateful to Teo Banica and Ion Nechita, who participated in earlier parts of
this project and made valuable contributions to the results presented here.

Theorem \ref{tm:p} was obtained thanks to the website {\it MathOverflow}. 
Shortly after we posted a question\footnote{{\tt http://mathoverflow.net/questions/135949}} which is a more elementary but equivalent statement of Theorem \ref{tm:p}, Noam Elkies posted an answer to this question which is basically a proof of Theorem \ref{tm:p}. Peter M\"uller then posted another answer pointing to the papers by Gluck, by R\'onyai and Sz\H{o}nyi, and by Hiramine, containing the ``original'' proofs of this result. We are grateful to  Noam Elkies and Peter M\"uller for their contributions.

The authors are also grateful to two anonymous referees for helpful suggestions on an earlier version
of this work.

\section{Circulant Butson matrices}
\label{sc:background}

We consider in this paper various $n\times n$ matrices over the real or complex numbers. The matrix indices will range in $\{0,1,\ldots,n-1\}$, and will be taken modulo $n$.

\begin{definition}
A complex Hadamard matrix is a square matrix $H\in M_n(\mathbb C)$, all whose entries are on the unit circle, $|H_{ij}|=1$,  and whose rows are pairwise orthogonal.
\end{definition}

It follows from basic linear algebra that the columns are pairwise orthogonal as well. In fact, the $n\times n$ complex Hadamard matrices form a real algebraic manifold, given by:
$$C_n(\infty)=M_n(\mathbb T)\cap\sqrt{n}U(n)$$

Here, and in what follows, we denote by $\mathbb T$ the unit circle in the complex plane.

The basic example is the Fourier matrix, $F_n=(w^{ij})_{ij}$ with $w=e^{2\pi i/n}$. There are many other interesting examples, see \cite{tzy}. These examples are often constructed by using roots of unity, and we have the following definition, going back to Butson's work \cite{but}:

\begin{definition}
The Butson class $C_n(l)$ with $l\in\{2,3,\ldots,\infty\}$ consists of the $n\times n$ complex Hadamard matrices having as entries the $l$-th roots of unity. In particular:
\begin{enumerate}
\item $C_n(2)$ is the set of usual (real) $n\times n$ Hadamard matrices.

\item $C_n(4)$ is the set of $n\times n$ Turyn matrices, over $\{\pm 1,\pm i\}$.

\item $C_n(\infty)$ is the set of all $n\times n$ complex Hadamard matrices.
\end{enumerate}
\end{definition}

As explained in the introduction, we will be mostly interested in the circulant case, with the Circulant Hadamard Conjecture (CHC) in mind. Observe that the CHC states that, for $n>4$:
$$C_n^{circ}(2)=\emptyset.$$ 

\section{Circulant matrices of prime order}

We fix a prime number $p$, and we let $\omega=e^{2\pi i/p}$. The proof of Theorem \ref{tm:uniqueness-p} is based on:

\begin{theorem} \label{tm:p}
Let $M=(m_{i,j})_{i,j=1}^n \in M_p({\mathbb Z}/p{\mathbb Z})$ be a circulant matrix. Then $U=(\omega^{m_{ij}})_{i,j=0}^{n-1}$ is a circulant complex Hadamard matrix if and only if its first row is given by a polynomial of degree $2$, that is, if and only if there are $a,b,c\in Z/p{\mathbb Z}$ with $a\neq 0$ such that, for all $j\in {\mathbb Z}/p{\mathbb Z}$, 
$$ m_{0j} = aj^2 + bj + c~. $$ 
\end{theorem}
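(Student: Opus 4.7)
The plan is to reformulate the Hadamard condition on $U$ as a condition on the function $f:\mathbb{F}_p \to \mathbb{F}_p$ defined by $f(j) = m_{0j}$, and then invoke the classification of \emph{planar} functions on $\mathbb{F}_p$.

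First, since $M$ is circulant we have $m_{ij} = f(j-i)$, so orthogonality of rows $i$ and $k$ in $U$ reads $\sum_j \omega^{f(j-i)-f(j-k)} = 0$. Reindexing and setting $h = k-i$, this is equivalent to the following condition: for every $h \in \mathbb{F}_p\setminus\{0\}$,
$$\sum_{j\in\mathbb{F}_p} \omega^{f(j+h)-f(j)} = 0. \qquad (\star)$$
Since the minimal polynomial of $\omega$ over $\mathbb{Q}$ is $1+X+\cdots+X^{p-1}$, a nonnegative integer combination $\sum_{r\in\mathbb{F}_p} n_r \omega^r$ with $\sum_r n_r = p$ vanishes only when all $n_r = 1$. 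Applied to the multiset of values of $g_h(j) := f(j+h)-f(j)$, this shows that $(\star)$ is equivalent to asking that $g_h$ be a bijection of $\mathbb{F}_p$ for every $h \neq 0$.

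The condition that $j \mapsto f(j+h) - f(j)$ is a permutation of $\mathbb{F}_p$ for every nonzero $h$ is precisely the definition of a \emph{planar} (or PN) function from $\mathbb{F}_p$ to itself. At this point I would invoke the classification theorem of Gluck, of R\'onyai and Sz\H{o}nyi, and of Hiramine (as credited in the acknowledgements), which states that every planar function on $\mathbb{F}_p$, with $p$ an odd prime, is a quadratic polynomial $f(j) = aj^2 + bj + c$ with $a \neq 0$. This immediately yields the stated form.

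The converse direction is a short check: if $f(j) = aj^2+bj+c$ with $a\neq 0$, then $f(j+h)-f(j) = (2ah)\,j + (ah^2+bh)$ is an affine function of $j$ with nonzero slope $2ah$ (using $p$ odd), hence a bijection of $\mathbb{F}_p$, so $(\star)$ holds. The main obstacle is precisely the classification theorem in the planar-function step, which is the substantive content; without it one would have to redo the work of Gluck, R\'onyai--Sz\H{o}nyi and Hiramine, for instance via Weil-type character-sum estimates, or via algebraic-geometric arguments on the curves $f(X+h)-f(X) = f(Y+h)-f(Y)$. The remaining reductions (Steps 1, 2, and the converse) are essentially formal.
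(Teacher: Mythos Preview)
Your proposal is correct and follows essentially the same route as the paper: reformulate the Hadamard condition on $U$ as the requirement that $j\mapsto f(j+h)-f(j)$ be a permutation of $\mathbb{F}_p$ for every nonzero $h$, recognise this as the definition of a planar function, and then invoke the Gluck / R\'onyai--Sz\H{o}nyi / Hiramine classification. If anything, you are more explicit than the paper, which simply declares the combinatorial reformulation ``clearly equivalent'' and cites the planar-function theorem; your minimal-polynomial argument for why $(\star)$ forces $g_h$ to be a bijection, and your verification of the converse, fill in details the paper leaves to the reader.
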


We first reformulate this theorem in a more combinatorial but clearly
equivalent form:

\begin{theorem} \label{tm:equiv}
Let $p\geq 3$ be a prime number, and let $u:\mathbb{Z}/p\mathbb{Z}\to \mathbb{Z}/p\mathbb{Z}$ be a map such that, for all $l\in \mathbb{Z}/p\mathbb{Z}$,$l\neq 0$, the map $k\mapsto u(k+l)-u(k)$ is a permutation. 
Then $u$ is a polynomial of degree $2$. 
\end{theorem}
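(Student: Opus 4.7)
The plan is to represent $u$ as a polynomial over $\mathbb{F}_p$ and use the permutation hypothesis to pin down its degree. Every function $\mathbb{F}_p \to \mathbb{F}_p$ is uniquely a polynomial of degree at most $p-1$, so I write $u(x) = \sum_{i=0}^{p-1} a_i x^i$ and set $d := \deg u$; the target is $d = 2$. After translating so that $u(0) = 0$ (which preserves the hypothesis), one sees immediately that $d \geq 2$, since for $d \leq 1$ the difference $\Delta_l u(x) := u(x+l) - u(x)$ is constant in $x$, hence not a permutation. Similarly $d = 3$ is ruled out, because $\Delta_l u$ would then have degree $2$, and for any odd prime $p$ no degree-$2$ polynomial $f(x) = \alpha x^2 + \beta x + \gamma$ is a permutation of $\mathbb{F}_p$: the factorization $f(x) - f(y) = (x-y)(\alpha(x+y) + \beta)$ yields non-trivial collisions whenever $x + y = -\beta/\alpha$ and $x \neq y$. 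The real work is to rule out $d \geq 4$.

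For $d \geq 4$, I would apply Hermite's permutation criterion to each $\Delta_l u$: for every $l \in \mathbb{F}_p^*$ and every $t \in \{1, \ldots, p-2\}$,
$$\sum_{x \in \mathbb{F}_p} \Delta_l u(x)^t \;=\; 0 \quad \text{in } \mathbb{F}_p.$$
Writing $\Delta_l u(x) = \sum_{j=0}^{d-1} c_j(l)\, x^j$ with $c_j(l) = \sum_{i > j} a_i \binom{i}{j}\, l^{i-j}$, substituting into the Hermite identity, and using the basic evaluation $\sum_{x \in \mathbb{F}_p} x^k = -1$ when $(p-1) \mid k$ and $k > 0$ (and $0$ otherwise) converts each identity into a polynomial equation in $l$ whose coefficients are multinomial combinations of the $a_i$. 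The plan is then to take $t$ minimal with $t(d-1) \geq p - 1$ and to extract the top-degree-in-$l$ coefficient, which up to a nonzero combinatorial factor in $\mathbb{F}_p$ should be a power of $a_d$; this forces $a_d = 0$, contradicting $d = \deg u$.

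The main obstacle is in this last step: showing that the leading combinatorial factor is indeed nonzero modulo $p$ requires careful multinomial bookkeeping, and this is precisely where the three published proofs diverge --- Gluck via group-theoretic methods, Hiramine via elementary polynomial identities, and R\'onyai--Sz\H{o}nyi via Weil-type point counts on the plane curve $u(x) = u(y)$ over $\mathbb{F}_p$. A cleaner Fourier reformulation also exists: a short computation shows $|S(\alpha)|^2 = p$ for every $\alpha \neq 0$, where $S(\alpha) = \sum_x e^{2\pi i \alpha u(x)/p}$, reducing the theorem to classifying functions $\mathbb{F}_p \to \mathbb{F}_p$ with flat Fourier spectrum --- arguably the most conceptual form of the result, but one whose structure theorem is itself the hard content.
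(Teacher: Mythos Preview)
The paper does not actually prove this theorem: it simply observes that functions satisfying the hypothesis are the \emph{planar functions} of finite geometry, and cites the independent proofs of Gluck, R\'onyai--Sz\H{o}nyi, and Hiramine (together with an unpublished argument of Elkies mentioned in the acknowledgements). So there is no ``paper's own proof'' to compare against beyond these references.

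As for your outline: the easy cases are handled correctly. For $d\le 1$ the difference $\Delta_l u$ is constant, hence not a permutation; for $d=3$ the difference has degree~$2$ in~$x$, and over $\mathbb{F}_p$ with $p$ odd no quadratic is a permutation, by the collision you write down. The Fourier computation $|S(\alpha)|^2=p$ for $\alpha\neq 0$ is also correct and is a clean reformulation.

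The genuine gap is exactly where you locate it. For $d\ge 4$ you invoke Hermite's criterion for each $\Delta_l u$ and propose to extract, from the identity with $t$ minimal such that $t(d-1)\ge p-1$, a leading-in-$l$ coefficient that is a nonzero scalar times a power of $a_d$. You then stop and say this ``requires careful multinomial bookkeeping''. That bookkeeping \emph{is} the theorem: one must control not only the top term but the interaction with lower-degree coefficients of $u$, and the nonvanishing of the relevant multinomial factor modulo $p$ is not automatic (indeed, a naive version of this argument fails for Dembowski--Ostrom polynomials in the prime-power case, which is why planarity over $\mathbb{F}_{p^n}$ with $n>1$ does \emph{not} force degree~$2$). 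Hiramine's proof does follow essentially the Hermite-criterion route you sketch, but the combinatorial identity he isolates is specific and not recoverable from your description; Gluck's and R\'onyai--Sz\H{o}nyi's arguments are structurally different. So what you have written is an informed road map that correctly identifies the crux, but it is not a proof, and the missing step is the entire difficulty.
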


Under this form, the statement is well-known to specialists of finite geometry, who call the above functions ``planar''. The fact that any planar function on a field of prime order is a quadratic polynomial was proved independently by Gluck \cite{glu}, by   R\'onyai and Sz\H{o}nyi \cite{rsz} and by Hiramine \cite{hir}. See the introduction/acknowledgements.

\begin{theorem} \label{tm:p-equiv}
Let $M=(m_{i,j})_{i,j=0}^{n-1} \in M_p({\mathbb Z}/p{\mathbb Z})$ be a circulant matrix. If the first row of $M$ is given by a polynomial of degree $2$
then $U=(\omega^{m_{ij}})_{i,j=0}^{n-1}$ is equivalent to the Fourier matrix.
\end{theorem}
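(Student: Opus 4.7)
The plan is to directly manipulate the matrix entries to exhibit the equivalence with $F_p$, using only the operations permitted by the (non-circulant) notion of equivalence of complex Hadamard matrices: permutation of rows and columns, multiplication of each row and each column by a unimodular constant, and multiplication of the whole matrix by a global constant.

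First, I would use the circulant structure together with the hypothesis to write $m_{ij} = m_{0,j-i} = a(j-i)^2 + b(j-i) + c$ for all $i,j \in \mathbb{Z}/p\mathbb{Z}$. Expanding the square gives
$$m_{ij} = ai^2 + aj^2 - 2aij + b(j-i) + c \pmod p,$$
so
$$U_{ij} = \omega^{m_{ij}} = \omega^{c}\cdot \omega^{ai^2 - bi}\cdot \omega^{aj^2 + bj}\cdot \omega^{-2aij}.$$
The factor $\omega^{c}$ is a global constant, the factor $\omega^{ai^2-bi}$ depends only on the row index $i$, and the factor $\omega^{aj^2+bj}$ depends only on the column index $j$. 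Dividing each row and each column by the corresponding factor, and dividing the whole matrix by $\omega^c$, transforms $U$ into the matrix $V$ with entries $V_{ij} = \omega^{-2aij}$.

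Second, I would recognize $V$ as a permutation of $F_p$. Since $p \geq 3$ is prime and $a \in (\mathbb{Z}/p\mathbb{Z})^*$ (because $a \neq 0$), the element $-2a$ is invertible modulo $p$, so the map $\sigma : j \mapsto -2a\cdot j$ is a permutation of $\mathbb{Z}/p\mathbb{Z}$. We have
$$V_{ij} = \omega^{i \cdot \sigma(j)} = (F_p)_{i,\sigma(j)},$$
so $V$ is obtained from $F_p$ by permuting the columns according to $\sigma$. Composing with the row/column scalings used in the previous step yields a chain of equivalences $U \sim V \sim F_p$, which is the desired conclusion.

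I do not anticipate any serious obstacle: the whole argument is essentially a completion of the square, combined with the fact that the leading coefficient $a$ is invertible modulo $p$. The only point worth noting is that the equivalence used here is the standard (non-circulant) one for complex Hadamard matrices, which is exactly what the statement of Theorem \ref{tm:uniqueness-p} allows and is consistent with the remark made right after that theorem.
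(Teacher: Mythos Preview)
Your proof is correct and follows essentially the same approach as the paper: both arguments expand $m_{ij}=a(j-i)^2+b(j-i)+c$, strip off the separable terms by row and column scalings to reach the matrix $(\omega^{-2aij})$, and then use the invertibility of $-2a$ in $\mathbb Z/p\mathbb Z$ to permute to $F_p$. The only cosmetic differences are that the paper performs the dephasing in two sequential steps (subtracting the first row, then the first column) rather than reading off the full factorization at once, and that it permutes rows instead of columns.
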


\begin{proof}
Suppose first that the first row of $M$ is given by a polynomial of degree $2$. This means that there are elements $a,b,c\in {\mathbb Z}/p{\mathbb Z}$ with $a\neq 0$ such that 
$$ \forall j\in {\mathbb Z}/p{\mathbb Z}, \quad m_{0j} = aj^2 + bj +c~. $$
Since $M$ is circulant, this means that 
$$  \forall i,j\in {\mathbb Z}/p{\mathbb Z}, \quad m_{ij} = a(j-i)^2 + b(j-i) +c~. $$

First, we remove from each entry of $M$ the first entry of the same column. 
We get a new matrix $M'$ equivalent
to $M$, with entries given by 
\begin{eqnarray*}
m'_{ij} & = & m_{ij}-m_{0j} \\
& = &  a((j-i)^2-j^2) - bi \\
& = & ai^2-2aij-bi~.
\end{eqnarray*}

Second, we remove from each entry of this matrix $M'$ the first entry of its row. We obtain a new matrix $M''$ which is equivalent to $M'$ and therefore to $M$, with entries 
$$ m''_{ij} =  m'_{ij}-m'_{i0}=  -2aij~. $$

Third, we permute the rows by sending row $i$ to row $-2ai$. We finally obtain a matrix $M'''$, still equivalent to $M$, with entries
$$ m'''_{ij}=ij~. $$
Clearly this is the Fourier matrix, so $M$ is equivalent to $F_p$. 
\end{proof}

\section{Matrices based on two prime numbers}

We now turn to the proof of Theorem \ref{tm:pq}. Note that most of the
proof deals with non-circulant complex Hadamard matrices of Butson type.

In what follows, $p$ and $q$ denote two distinct primes. 
We consider a matrix in $C_{p+q}(pq)$, and use additive notations, so that
$M\in M_{p+q}({\mathbb Z}/pq{\mathbb Z})$. We call $L_i, i=0,\ldots, p+q-1$ the rows of $M$, 
and for $i,j\in {{\mathbb Z}/(p+q){\mathbb Z}}$ we set $L_{ij}=L_j-L_i$. 

The fact that $M$ corresponds to a Hadamard matrix translates as the following basic property.

\begin{lemma} \label{lm:cycles}
For all distinct $i,j\in {{\mathbb Z}/(p+q){\mathbb Z}}$, there exists a partition 
${{\mathbb Z}/(p+q){\mathbb Z}}=P_{ij}\sqcup Q_{ij}\sqcup R_{ij}$ and an element 
$r\in {\mathbb Z}/pq{\mathbb Z}$ such that:
\begin{itemize}
\item $\# R_{ij}=2$, and $L_{ij}(k)=r$ for all $k\in R_{ij}$,
\item $\# P_{ij}=p-1$, and $ \{ L_{ij}(k), k\in P_{ij}\} = \{ r+q({\mathbb Z}/pq{\mathbb Z})\}\setminus \{ r\}~, $
\item $\# Q_{ij}=q-1$, and $ \{ L_{ij}(k), k\in Q_{ij}\} = \{ r+p({\mathbb Z}/pq{\mathbb Z})\}\setminus \{ r\}~. $
\end{itemize}
\end{lemma}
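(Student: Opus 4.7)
The plan is to convert the Hadamard orthogonality between rows $L_i$ and $L_j$ into a vanishing sum of $pq$-th roots of unity, and then invoke the classical structure theorem for such sums.

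First, writing $\omega = e^{2\pi i/pq}$, the orthogonality of rows $i$ and $j$ of the matrix $(\omega^{M_{kl}})$ translates (as in Section 2) into $\sum_{k=0}^{p+q-1}\omega^{L_{ij}(k)} = 0$. Thus the multiset $\{L_{ij}(k) : k \in \mathbb{Z}/(p+q)\mathbb{Z}\} \subset \mathbb{Z}/pq\mathbb{Z}$, of cardinality $p+q$, indexes a vanishing multiset of $pq$-th roots of unity. I would then apply the classical theorem (R\'edei, de Bruijn; see also Lam--Leung \cite{lle}): every vanishing sum of $pq$-th roots of unity, with $p \ne q$ distinct primes, is a $\mathbb{Z}_{\geq 0}$-combination of sums over $p$-element cosets of the form $r + q\mathbb{Z}/pq\mathbb{Z}$ (the cosets of the subgroup of $p$-th roots of unity) and of sums over $q$-element cosets of the form $r + p\mathbb{Z}/pq\mathbb{Z}$. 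Consequently, the multiset $\{L_{ij}(k)\}_k$ is the disjoint multiset union of $a$ such \emph{$p$-cosets} and $b$ such \emph{$q$-cosets}, for some integers $a, b \geq 0$ with $ap + bq = p+q$.

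Second, I would solve this Diophantine constraint. Reducing $ap + bq = p+q$ modulo $p$ gives $bq \equiv q \pmod p$, so $b \equiv 1 \pmod p$; symmetrically, $a \equiv 1 \pmod q$. Together with $a, b \geq 0$ and the size equality $ap + bq = p+q$ (which forces both $ap \leq p+q$ and $bq \leq p+q$), the only solution is $a = b = 1$: any larger value of $a$ or $b$ compatible with the congruences would already exceed $p+q$.

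Finally, by the Chinese Remainder Theorem, any coset of $q\mathbb{Z}/pq\mathbb{Z}$ meets any coset of $p\mathbb{Z}/pq\mathbb{Z}$ in exactly one element of $\mathbb{Z}/pq\mathbb{Z}$; call that common element $r$. Since our multiset has $p+q$ terms, taken from the union of a $p$-coset and a $q$-coset (sharing only $r$), which contains $p+q-1$ distinct values, $r$ must occur with multiplicity $2$ and every other value with multiplicity $1$. Setting $R_{ij} := L_{ij}^{-1}(\{r\})$, $P_{ij} := L_{ij}^{-1}\bigl((r + q\mathbb{Z}/pq\mathbb{Z}) \setminus \{r\}\bigr)$, and $Q_{ij} := L_{ij}^{-1}\bigl((r + p\mathbb{Z}/pq\mathbb{Z}) \setminus \{r\}\bigr)$ yields the partition with the stated cardinalities and image sets. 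The main conceptual step is the appeal to the vanishing-sum structure theorem; once granted, the rest is elementary arithmetic and bookkeeping.
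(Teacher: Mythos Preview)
Your argument is correct and is essentially the argument the paper has in mind: the paper's own proof is the single sentence ``This follows from the Chinese Remainder Theorem,'' which is really a pointer to exactly the chain you spell out (orthogonality $\Rightarrow$ vanishing sum of $pq$-th roots $\Rightarrow$ R\'edei/de~Bruijn/Lam--Leung structure $\Rightarrow$ the unique decomposition $a=b=1$ $\Rightarrow$ CRT to locate the shared element $r$). Your write-up simply makes explicit the structure theorem and the Diophantine step that the paper suppresses.
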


\begin{proof}
This follows from the Chinese Remainder Theorem.
\end{proof}

Thus, the values of $L_{ij}$ on $P_{ij}$  go through all elements of a $p$-cycle in ${\mathbb Z}/pq{\mathbb Z}$ with the exception of $r$,  the values of $L_{ij}$ on the elements of $Q_{ij}$ 
go through all elements of a $q$-cycle in ${\mathbb Z}/pq{\mathbb Z}$ with the exception of $r$, and the value $r$ is taken twice, exactly at the two elements of $R_{ij}$.

To simplify a little the notations, we set $P^+_{ij}=P_{ij}\cup R_{ij}$ and 
$Q^+_{ij}=Q_{ij}\cup R_{ij}$. Note that the lemma above, as other statements below, also hold
if we swap $P$ and $Q$.

\begin{lemma} \label{lm:p}
Suppose that $i,j\in {\mathbb Z}/(p+q){\mathbb Z}$ are distinct and that 
$a,b\in {\mathbb Z}/(p+q){\mathbb Z}$ are distinct. Then $L_{ij}(b)-L_{ij}(a)\in q({\mathbb Z}/pq{\mathbb Z})$
if and only if $a,b\in P^+_{ij}$.
\end{lemma}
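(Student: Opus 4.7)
My plan is to argue by contradiction, exploiting the fact (which is really the Chinese Remainder Theorem behind Lemma \ref{lm:cycles}) that $p(\mathbb{Z}/pq\mathbb{Z}) \cap q(\mathbb{Z}/pq\mathbb{Z}) = \{0\}$. The first step is to observe that since $\mathbb{Z}/(p+q)\mathbb{Z} = P_{ij} \sqcup Q_{ij} \sqcup R_{ij}$ and $P_{ij}^+ = P_{ij} \cup R_{ij}$, the complement of $P_{ij}^+$ is precisely $Q_{ij}$. So to prove the lemma it suffices to rule out $a \in Q_{ij}$, and by the symmetry of the hypothesis in $a$ and $b$ (the subgroup $q(\mathbb{Z}/pq\mathbb{Z})$ is closed under negation), this will automatically rule out $b \in Q_{ij}$ as well.

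Next, I would reformulate Lemma \ref{lm:cycles} using the two coordinate projections $\pi_p : \mathbb{Z}/pq\mathbb{Z} \to \mathbb{Z}/p\mathbb{Z}$ and $\pi_q : \mathbb{Z}/pq\mathbb{Z} \to \mathbb{Z}/q\mathbb{Z}$, whose kernels are exactly $q(\mathbb{Z}/pq\mathbb{Z})$ and $p(\mathbb{Z}/pq\mathbb{Z})$ respectively. The cosets described in Lemma \ref{lm:cycles} then translate to the clean characterizations
$$P_{ij}^+ = \{k : \pi_q(L_{ij}(k)) = \pi_q(r)\}, \qquad Q_{ij}^+ = \{k : \pi_p(L_{ij}(k)) = \pi_p(r)\},$$
and the hypothesis $L_{ij}(b) - L_{ij}(a) \in q(\mathbb{Z}/pq\mathbb{Z})$ becomes simply $\pi_q(L_{ij}(a)) = \pi_q(L_{ij}(b))$.

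With this rewriting in place, the rest should be immediate. Assuming for contradiction that $a \notin P_{ij}^+$, we have $\pi_q(L_{ij}(a)) \neq \pi_q(r)$, hence $\pi_q(L_{ij}(b)) \neq \pi_q(r)$ as well, placing $b$ in $Q_{ij}$; so both $a$ and $b$ lie in $Q_{ij}$ and therefore $\pi_p(L_{ij}(a)) = \pi_p(L_{ij}(b)) = \pi_p(r)$. Combining, $L_{ij}(a) = L_{ij}(b)$, and since Lemma \ref{lm:cycles} gives that $L_{ij}|_{Q_{ij}}$ is a bijection onto the $(q-1)$-element set $(r + p(\mathbb{Z}/pq\mathbb{Z})) \setminus \{r\}$ (by a cardinality match between source and target), this forces $a = b$, contradicting $a \neq b$. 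There is no substantial obstacle here: the entire argument is CRT bookkeeping once Lemma \ref{lm:cycles} is phrased through the two coordinate projections, and the only point one must double-check is the bijectivity of $L_{ij}|_{Q_{ij}}$, which is immediate from the cardinality count already present in Lemma \ref{lm:cycles}.
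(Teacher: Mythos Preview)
Your argument is correct and follows essentially the same route as the paper's proof: both hinge on the observation that if $a,b\in Q_{ij}$ then $L_{ij}(b)-L_{ij}(a)$ lies in $p(\mathbb{Z}/pq\mathbb{Z})\cap q(\mathbb{Z}/pq\mathbb{Z})=\{0\}$, contradicting the injectivity of $L_{ij}$ on $Q_{ij}$, and then that membership in $P_{ij}^+$ is preserved under the hypothesis. Your explicit introduction of the projections $\pi_p,\pi_q$ is a notational expansion of the same CRT bookkeeping the paper carries out directly; the logical skeleton is identical.
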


\begin{proof}
If $a,b\in Q_{ij}$, then $L_{ij}(b)-L_{ij}(a)\in p({\mathbb Z}/pq{\mathbb Z})$. But $p({\mathbb Z}/pq{\mathbb Z})\cap q({\mathbb Z}/pq{\mathbb Z})=\{ 0\}$ since $p,q$ are prime. So $L_{ij}(b)-L_{ij}(a)=0$, which is not possible if $a,b\in Q_{ij}$ are distinct. So either $a$ or $b$ is in $P^+_{ij}$. But then it follows from the definition of $P^+_{ij}$, and from the fact that $L_{ij}(b)-L_{ij}(a)\in q({\mathbb Z}/pq{\mathbb Z})$, that the other is in $P^+_{ij}$, too.
\end{proof}

\begin{lemma} \label{lm:PijQjk}
Let $i,j,k\in {\mathbb Z}/(p+q){\mathbb Z}$ be distinct. Then $\#(P_{ij}\cap Q^+_{jk})\leq 2$.
\end{lemma}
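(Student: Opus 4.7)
The plan is to argue by contradiction: suppose $S := P_{ij}\cap Q^+_{jk}$ contains three distinct elements $a,b,c$, and derive a contradiction using Lemma \ref{lm:p} together with a counting argument on $|Q_{jk}|$. For any distinct $x,y\in S$, one has $L_{ij}(x)-L_{ij}(y)\in q\mathbb Z\setminus\{0\}$ (values of $L_{ij}$ on $P_{ij}$ are distinct elements of the $p$-cycle) and $L_{jk}(x)-L_{jk}(y)\in p\mathbb Z$, with equality to $0$ only when both lie in $R_{jk}$. Since $|R_{jk}|=2$, at most one pair among $\{a,b,c\}$ can sit inside $R_{jk}$. This naturally splits the analysis into two cases according to whether $|S\cap R_{jk}|\le 1$ (Case A) or $R_{jk}\subset S$ (Case B).

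In Case A, every pair has nonzero $L_{jk}$-difference in $p\mathbb Z$, and nonzero $L_{ij}$-difference in $q\mathbb Z$, so each pairwise $L_{ik}$-difference lies in $(q\mathbb Z\setminus\{0\})+(p\mathbb Z\setminus\{0\})$, hence in neither $q\mathbb Z$ nor $p\mathbb Z$. By Lemma \ref{lm:p} (and its $p\leftrightarrow q$ analogue applied to $L_{ik}$), no two of $a,b,c$ can both lie in $P^+_{ik}$, and no two can both lie in $Q^+_{ik}$. Since $\mathbb Z/(p+q)\mathbb Z=P^+_{ik}\cup Q^+_{ik}$, this forces $3\le 2$, a contradiction.

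Case B is the substantive one. Here $R_{jk}\subset S$ means both elements of $R_{jk}$ (call them $a,b$) lie in $P_{ij}$, and $c\in P_{ij}\cap Q_{jk}$. Applying Lemma \ref{lm:p} to $(a,b)$ (whose $L_{ik}$-difference equals their $L_{ij}$-difference, thus in $q\mathbb Z$) gives $a,b\in P^+_{ik}$; reducing modulo $q$ forces $\delta:=r_{ij}+r_{jk}-r_{ik}\in q\mathbb Z$, so write $\delta=qe$ with $e\in\{0,\dots,p-1\}$. If $e=0$ a direct calculation rules out any $x\in P_{ij}\cap Q_{jk}$ (the constraint that $L_{ik}(x)$ be valid forces $s_x\equiv 0\pmod p$, impossible for $s_x\in\{1,\dots,p-1\}$), so $|S|\le 2$, contradiction. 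Assume $e\neq0$. The hard part is to derive enough structure on the remaining columns to over-constrain $Q_{jk}$.

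The key computation is twofold. First, for $y\in R_{ij}$, one shows that if $y\in Q^+_{ik}$ then $L_{jk}(y)=r_{jk}-qe-pt'$ cannot lie in either $r_{jk}+q\mathbb Z$ or $r_{jk}+p\mathbb Z$ (because $e,t'\neq 0$), which is impossible; so $y\in P^+_{ik}$, and then $L_{jk}(y)\in r_{jk}+q\mathbb Z\setminus\{r_{jk}\}$, giving $R_{ij}\subset P_{jk}$ and thus $R_{ij}\cap Q_{jk}=\emptyset$. Second, for $x\in Q_{ij}\cap Q_{jk}$, writing $L_{ij}(x)=r_{ij}+pt$ and $L_{jk}(x)=r_{jk}+pu$, validity of $L_{ik}(x)=r_{ik}+qe+p(t+u)$ forces $t+u\equiv 0\pmod q$, which collapses $L_{ik}(x)$ to the single value $r_{ik}+qe$; since this value is hit exactly once (by the unique element of $P_{ik}$ with parameter $s=e$), we get $|Q_{ij}\cap Q_{jk}|\le 1$. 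Combined with $|Q_{jk}\cap P_{ij}|\le 1$ (the candidate $c$) and the preceding $|Q_{jk}\cap R_{ij}|=0$, the decomposition of $Q_{jk}$ by $L_{ij}$-type yields $|Q_{jk}|\le 1+1+0=2$. But $|Q_{jk}|=q-1\ge 4$ since $q\ge5$, the desired contradiction. The main obstacle is clearly this Case B argument — extracting the fixed value $L_{ik}(x)=r_{ik}+qe$ on $Q_{ij}\cap Q_{jk}$, which is where the hypothesis $q\ge 5$ decisively enters.
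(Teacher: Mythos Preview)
Your argument is correct (under the extra hypothesis $q\ge 5$, which you flag and which is available from Theorem~\ref{tm:pq}), but it is considerably more elaborate than the paper's. The paper gives a one-paragraph pigeonhole: with three elements $a,b,c\in P_{ij}\cap Q^+_{jk}$, two of them lie in the same half $P^+_{ki}$ or $Q^+_{ki}$ of the $(k,i)$-partition; plugging into $L_{ij}+L_{jk}+L_{ki}=0$ then forces one of the differences $L_{ij}(b)-L_{ij}(a)$ or $L_{jk}(b)-L_{jk}(a)$ to vanish, which is declared a contradiction. Your Case~A is exactly (the contrapositive of) this pigeonhole.

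Your Case~B, however, treats the boundary configuration that the paper's short argument glosses over: if the two elements that end up with equal $L_{jk}$-values are precisely the two points of $R_{jk}$, then $L_{jk}(a)=L_{jk}(b)$ is \emph{not} yet a contradiction (indeed the paper's proof literally opens with ``$P_{ij}\cap Q_{jk}$'' rather than $Q^+_{jk}$, and its final contradiction only bites against $a,b\in Q_{jk}$). You close this gap with a global count: the constraint $\delta=qe$ pins the $L_{ij}$-parameter of any $x\in P_{ij}\cap Q_{jk}$ to the single value $s_x=-e$, pins $L_{ik}$ on $Q_{ij}\cap Q_{jk}$ to a single value, and forces $R_{ij}\subset P_{jk}$; summing gives $|Q_{jk}|\le 2<q-1$. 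So your route genuinely adds content, at the cost of invoking $q\ge 5$; the paper's route is shorter and uniform in $p,q$ but, read literally, only yields $\#(P_{ij}\cap Q_{jk})\le 2$.

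Two small points of exposition: (i) when you write ``if $y\in Q^+_{ik}$ then $t'\neq 0$'' you really mean $y\in Q_{ik}$ --- if $y\in R_{ik}$ then $y\in P^+_{ik}$ already and the case is moot; (ii) the bound $|Q_{jk}\cap P_{ij}|\le 1$ deserves one line of justification (the forced value $s_x=-e$ together with injectivity of $L_{ij}$ on $P_{ij}$), not just ``the candidate $c$''.
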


\begin{proof}
Suppose by contradiction that $P_{ij}\cap Q^+_{jk}$ contains three distinct
elements $a,b,c$. But then either two of them are in $P_{ij}\cap Q^+_{jk}\cap P^+_{ki}$ or two of them are in $P_{ij}\cap Q^+_{jk}\cap Q^+_{ki}$. We suppose for instance that $a,b\in P_{ij}\cap Q^+_{jk}\cap P^+_{ki}$, the same argument applies in the other cases.

Since $a,b\in P_{ij}$, $L_{ij}(b)-L_{ij}(a)\in q({\mathbb Z}/pq{\mathbb Z}$). Similarly, since $a,b\in P^+_{ki}$, $L_{ki}(b)-L_{ki}(a)\in q({\mathbb Z}/pq{\mathbb Z})$, so that $L_{jk}(b)-L_{jk}(a)=-(L_{ij}(b)-L_{ij}(a))-(L_{ki}(b)-L_{ki}(a))\in q({\mathbb Z}/pq{\mathbb Z})$. However we also know that $a,b\in Q^+_{jk}$, so that $L_{jk}(b)-L_{jk}(a)\in p({\mathbb Z}/pq{\mathbb Z})$, and therefore $L_{jk}(b)-L_{jk}(a)=0$, which contradicts the fact that $a,b\in P_{ij}$ and $a\neq b$.
\end{proof}

\begin{lemma} \label{lm:PijPjk}
Suppose that $p\geq 5$. Then for $i,j,k\in {\mathbb Z}/(p+q){\mathbb Z}$ distinct, $\#(P_{ij}\cap P_{jk})\geq 2$.
\end{lemma}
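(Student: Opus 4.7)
The plan is to deduce this directly from the preceding Lemma \ref{lm:PijQjk} by a one-line counting argument, using the partition structure supplied by Lemma \ref{lm:cycles}.

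First I would observe that since $\mathbb{Z}/(p+q)\mathbb{Z} = P_{jk} \sqcup Q_{jk} \sqcup R_{jk}$ is a disjoint partition, we have $P_{jk} = \mathbb{Z}/(p+q)\mathbb{Z} \setminus Q^+_{jk}$, where $Q^+_{jk} = Q_{jk} \cup R_{jk}$ as defined just before Lemma \ref{lm:p}. Intersecting with $P_{ij}$ gives the identity
$$ P_{ij} \cap P_{jk} = P_{ij} \setminus Q^+_{jk}~. $$
Taking cardinalities and recalling from Lemma \ref{lm:cycles} that $\#P_{ij} = p-1$, this yields
$$ \#(P_{ij} \cap P_{jk}) = \#P_{ij} - \#(P_{ij} \cap Q^+_{jk}) = (p-1) - \#(P_{ij} \cap Q^+_{jk})~. $$

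Next I would invoke Lemma \ref{lm:PijQjk} to bound $\#(P_{ij} \cap Q^+_{jk}) \leq 2$, which gives $\#(P_{ij} \cap P_{jk}) \geq (p-1)-2 = p-3$. Finally, the hypothesis $p\geq 5$ yields $p-3\geq 2$, as required.

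There is essentially no obstacle here; the real work has already been done in establishing Lemma \ref{lm:PijQjk}. The only point worth double-checking is that the role of $j$ is correctly aligned between the two lemmas: in Lemma \ref{lm:PijQjk} the intersection is $P_{ij} \cap Q^+_{jk}$, sharing the index $j$ in the middle position, which is precisely the configuration we need when expressing $P_{jk}$ as the complement of $Q^+_{jk}$ here.
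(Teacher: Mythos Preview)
Your proof is correct and is essentially identical to the paper's own argument: both decompose $P_{ij}$ as the disjoint union $(P_{ij}\cap P_{jk})\sqcup(P_{ij}\cap Q^+_{jk})$, invoke Lemma~\ref{lm:PijQjk} to bound the second piece by $2$, and conclude that $\#(P_{ij}\cap P_{jk})\geq(p-1)-2=p-3\geq 2$.
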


\begin{proof}
By definition,
$$ P_{ij} = P_{ij}\cap (P_{jk}\sqcup Q^+_{jk}) 
= (P_{ij}\cap P_{jk}) \sqcup (P_{ij}\cap Q^+_{jk})~. $$  
However $\# P_{ij}=p-1$, while $\#(P_{ij}\cap Q^+_{jk})\leq 2$ by Lemma \ref{lm:PijQjk}. 
The result follows.
\end{proof}

\begin{corollary} \label{cr:sub}
If $p\geq 5$ then for $i,j,k\in {\mathbb Z}/(p+q){\mathbb Z}$ distinct, $P^+_{ij}\cap P^+_{jk}\subset P^+_{ik}$.
\end{corollary}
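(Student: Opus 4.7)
The plan is to exploit Lemma \ref{lm:p} in its natural form: the set $P^+_{ij}$ is exactly characterized by the property that $L_{ij}$ takes values in a single coset of $q(\mathbb{Z}/pq\mathbb{Z})$. Since $L_{ik}=L_{ij}+L_{jk}$, any element that lands in the ``$P^+$'' part for both $(i,j)$ and $(j,k)$ should, together with a companion, have its $L_{ik}$-difference land in $q(\mathbb{Z}/pq\mathbb{Z})$, forcing it into $P^+_{ik}$. The hypothesis $p\geq 5$ enters to guarantee the existence of such a companion.

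Concretely, fix distinct $i,j,k\in\mathbb{Z}/(p+q)\mathbb{Z}$ and let $a\in P^+_{ij}\cap P^+_{jk}$. First I would produce a second element $b\neq a$ in $P^+_{ij}\cap P^+_{jk}$. Writing
\[
P_{ij}=(P_{ij}\cap P_{jk})\sqcup(P_{ij}\cap Q^+_{jk}),
\]
Lemma \ref{lm:PijQjk} gives $\#(P_{ij}\cap Q^+_{jk})\leq 2$, so $\#(P_{ij}\cap P_{jk})\geq (p-1)-2=p-3\geq 2$ because $p\geq 5$. Choose $b\in P_{ij}\cap P_{jk}$ with $b\neq a$; then $b\in P^+_{ij}\cap P^+_{jk}$ as well.

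Next, I would compute differences. By Lemma \ref{lm:cycles}, the values of $L_{ij}$ on $P^+_{ij}$ all lie in a single coset $r_{ij}+q(\mathbb{Z}/pq\mathbb{Z})$; in particular, $L_{ij}(b)-L_{ij}(a)\in q(\mathbb{Z}/pq\mathbb{Z})$. The same reasoning applied to the pair $(j,k)$ yields $L_{jk}(b)-L_{jk}(a)\in q(\mathbb{Z}/pq\mathbb{Z})$. Adding and using $L_{ik}=L_{ij}+L_{jk}$ gives
\[
L_{ik}(b)-L_{ik}(a)\in q(\mathbb{Z}/pq\mathbb{Z}).
\]
Since $a\neq b$ and $i\neq k$, Lemma \ref{lm:p} applied to the pair $(i,k)$ then forces $a,b\in P^+_{ik}$. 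In particular $a\in P^+_{ik}$, which is what we wanted.

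The only mildly delicate point is the combinatorial lower bound at the start: one needs two elements in $P^+_{ij}\cap P^+_{jk}$, one of which is the given $a$. The clean way is to produce two elements of $P_{ij}\cap P_{jk}$ (not merely of $P^+_{ij}\cap P^+_{jk}$), which is exactly what the hypothesis $p\geq 5$ delivers via Lemma \ref{lm:PijPjk}. After that, everything reduces to the additive identity $L_{ik}=L_{ij}+L_{jk}$ and the characterization of $P^+$ through $q$-cosets.
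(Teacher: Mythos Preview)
Your argument is correct and essentially identical to the paper's own proof: both use Lemma~\ref{lm:PijPjk} (which you rederive inline from Lemma~\ref{lm:PijQjk}) to obtain a second element $b\neq a$ in $P^+_{ij}\cap P^+_{jk}$, then combine the $q$-coset characterization with $L_{ik}=L_{ij}+L_{jk}$ and invoke Lemma~\ref{lm:p}. The only cosmetic difference is that the paper picks two arbitrary distinct elements of the intersection at once, whereas you fix $a$ first and then find a companion; the logic is the same.
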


\begin{proof}
According to Lemma \ref{lm:PijPjk}, $\#(P^+_{ij}\cap P^+_{jk})\geq 2$. Moreover if $a,b\in P^+_{ij}\cap P^+_{jk}$ are distinct, then $L_{ij}(b)-L_{ij}(a)\in q({\mathbb Z}/pq{\mathbb Z})$ and $L_{jk}(b)-L_{jk}(a)\in q({\mathbb Z}/pq{\mathbb Z})$. It follows that $L_{ik}(b)-L_{ik}(a)\in q({\mathbb Z}/pq{\mathbb Z})$ and therefore, from Lemma \ref{lm:p}, that $a$ and $b$ are in $P^+_{ik}$.
\end{proof}

\begin{lemma} \label{lm:PijQjk2}
Suppose that $p,q\geq 5$, and let $i,j,k\in {\mathbb Z}/(p+q){\mathbb Z}$ be distinct. 
Then $P_{ij}\cap Q_{jk}=\emptyset$. As a consequence, $P_{ij}\subset P_{jk}^+$ and $Q_{ij}\subset Q_{jk}^+$.
\end{lemma}

\begin{proof}
Let $a\in P_{ij}\cap P_{jk}$ and let $b\in Q_{ij}\cap Q_{jk}$ --- such elements
exist by Lemma \ref{lm:PijPjk}. We know that $a\in P^+_{ik}, b\in Q^+_{ik}$ 
by Corollary \ref{cr:sub}. 

Suppose now by contradiction that $c\in P_{ij}\cap Q_{jk}$. Then $a,c\in P_{ij}$
so that 
$$ L_{ij}(c) - L_{ij}(a)\in q({\mathbb Z}/pq{\mathbb Z})~, $$
and $a\in P_{jk},c\in Q_{jk}$, so that
$$ L_{jk}(c) - L_{jk}(a)\not\in q({\mathbb Z}/pq{\mathbb Z})~. $$
So 
$$ L_{ik}(c)-L_{ik}(a) = (L_{jk}(c) - L_{jk}(a)) + (L_{ij}(c) - L_{ij}(a)) 
\not\in q({\mathbb Z}/pq{\mathbb Z})~. $$
Since $a\in P^+_{ik}$, it follows that $c\not\in P^+_{ik}$. 

The same argument with $b$ instead of $a$ shows that $c\not\in Q^+_{ik}$,
which is a contradiction.

This shows that $P_{ij}\subset P_{jk}^+$. It follows that 
$P_{ij}\subset  P_{ij}^+\cap P_{jk}^+$, and therefore that  $P_{ij}\subset  P_{jk}^+$
by $P_{ik}^+$ by Corollary \ref{cr:sub}. The same argument works for $Q_{ij}$.
\end{proof}

\begin{corollary} \label{cr:ijkl}
Suppose that $p,q\geq 5$. Then for all $i,j,k,l\in {\mathbb Z}/(p+q){\mathbb Z}$ such that 
$i\neq j$ and $k\neq l$,
$\#(P_{ij}\cap P_{kl})\geq p-3$ and $\#(Q_{ij}\cap Q_{kl})\geq q-3$.
\end{corollary}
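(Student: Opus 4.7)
The plan is to reduce to Corollary \ref{cr:1} by casework on how many indices among $i,j,k,l$ coincide. A preliminary observation, immediate from Lemma \ref{lm:cycles} together with the identity $L_{ji}=-L_{ij}$ (and the fact that the subgroups $p({\mathbb Z}/pq{\mathbb Z})$ and $q({\mathbb Z}/pq{\mathbb Z})$ are closed under negation), is the symmetry $P_{ij}=P_{ji}$, $Q_{ij}=Q_{ji}$, $R_{ij}=R_{ji}$. This will let me relabel pairs freely. I assume throughout that $i\neq j$ and $k\neq l$, since otherwise the sets $P_{ij},Q_{ij}$ are not defined by Lemma \ref{lm:cycles}.

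If $\{i,j\}=\{k,l\}$ then $P_{ij}=P_{kl}$ and $|P_{ij}\cap P_{kl}|=p-1\geq p-3$. If $\{i,j\}$ and $\{k,l\}$ share exactly one element, the symmetry above reduces the situation to $j=k$ with $i,j,l$ pairwise distinct, and Corollary \ref{cr:1} gives $P_{ij}\subset P_{jl}^+=P_{jl}\sqcup R_{jl}$, whence $|P_{ij}\cap P_{jl}|\geq (p-1)-|R_{jl}|=p-3$.

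The main case is when $i,j,k,l$ are all distinct. Here I would apply Corollary \ref{cr:1} twice, both times with $j$ and $k$ as the two indices appearing in the conclusion. From the distinct triple $(i,j,k)$ I obtain $P_{ij}\subset P_{jk}^+$; from the distinct triple $(l,k,j)$ I obtain $P_{lk}\subset P_{kj}^+$, which by the symmetry above is the same as $P_{kl}\subset P_{jk}^+$. Thus both $P_{ij}$ and $P_{kl}$ are contained in $P_{jk}^+$, a set of size $(p-1)+|R_{jk}|=p+1$, and inclusion-exclusion inside $P_{jk}^+$ gives
$$|P_{ij}\cap P_{kl}|\geq |P_{ij}|+|P_{kl}|-|P_{jk}^+|=(p-1)+(p-1)-(p+1)=p-3.$$
The bound on $|Q_{ij}\cap Q_{kl}|$ is obtained by the identical three-case argument with $p$ and $q$ exchanged and the $Q$-part of Corollary \ref{cr:1} in place of the $P$-part.

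The only slightly delicate step is the symmetry $P_{ij}=P_{ji}$, without which each of the sub-cases in Case~2 and each of the two possible pivot choices in Case~3 would have to be verified separately. Once that is in hand, the argument is essentially a single inclusion-exclusion against the size bound $|P_{jk}^+|=p+1$, so I do not expect any serious obstacle beyond the bookkeeping of indices.
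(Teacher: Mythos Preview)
Your proof is correct and follows the same core approach as the paper: embed both $P_{ij}$ and $P_{kl}$ into the common set $P_{jk}^+$ via Corollary~\ref{cr:1} and then use inclusion--exclusion with $|P_{jk}^+|=p+1$. The paper's proof is a two-line version of your Case~3; your treatment is in fact more careful, since you spell out the symmetry $P_{ij}=P_{ji}$ and handle the degenerate cases where $\{i,j\}$ and $\{k,l\}$ overlap, which the paper leaves implicit.
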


\begin{proof}
We have $P_{ij}\subset P_{jk}^+$ and $P_{kl}\subset P_{jk}^+$ by Lemma \ref{lm:PijQjk2}.
The first result follows because $\# P_{ij}=\#P_{kl}=p-1$ while $\# P_{jk}^+=p+1$. 
The same argument used with $Q_{ij}$ gives the second result.
\end{proof}

We now prove a result about sets having large intersections with all their circular rotations. 

\begin{proposition}\label{prop:intersection}
Let $A \subset \mathbb Z / n\mathbb Z$ be a set with $\# A = a$ such that, for all $y \in \mathbb Z / n\mathbb Z$, $\#(A \cap (y+A)) \geq b$, for some fixed integer $b$. Then, we have that $a\geq b\sqrt n$.
\end{proposition}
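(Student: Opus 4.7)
My plan is to prove $a \geq \sqrt{bn}$ by an elementary double count. The approach turns on the \emph{difference-representation function} $r$, defined on $\mathbb{Z}/n\mathbb{Z}$ by
$$r(y) \;=\; \#\{(x, x') \in A \times A : x - x' = y\}.$$
The map $(x, x') \mapsto x$ puts this set in bijection with $\{x \in A : x - y \in A\} = A \cap (y + A)$, so $r(y) = \#(A \cap (y + A))$. Under this identification the hypothesis of the proposition is exactly $r(y) \geq b$ for every $y$.

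The proof is then a one-line double count: every element of $A \times A$ contributes to exactly one summand $r(x-x')$, hence
$$\sum_{y \in \mathbb{Z}/n\mathbb{Z}} r(y) \;=\; \#(A \times A) \;=\; a^2.$$
Combining this identity with the pointwise lower bound $r(y) \geq b$ immediately gives $nb \leq a^2$, i.e.\ $a \geq \sqrt{bn}$.

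I do not foresee any real obstacle; the whole content is the double-count identity above. I note in passing that the inequality my argument produces is $a \geq \sqrt{bn}$ rather than the printed $a \geq b\sqrt{n}$; the two coincide only when $b = 1$, and the trivial instance $A = \mathbb{Z}/n\mathbb{Z}$ (with $a = b = n$) already refutes the stronger form, so I suspect a typo in the statement. The bound $a^2 \geq bn$ is in any case exactly what is required in the sequel: taking $A = P_{01}$, the circulant symmetry $P_{c, c+1} = P_{01} + c$ combined with Corollary \ref{cr:ijkl} yields $\#(A \cap (y + A)) \geq p - 3$, so $(p-1)^2 \geq (p-3)(p+q)$, which rearranges to $q(p-3) \leq p + 1$ and contradicts $p, q \geq 5$.
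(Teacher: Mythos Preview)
Your argument is correct, and at bottom it is the same argument the paper gives: the paper phrases the double count in Fourier language (evaluating $\widehat{\pi*\pi'}$ at $0$ is exactly summing $r(y)$ over all $y$, and the Parseval step $\|\hat\pi\|_2^2=\|\pi\|_2^2=a$ plays no role beyond recovering $\sum_y r(y)=a^2$), so the content is identical and your presentation is the cleaner one.

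You are also right about the typo. The stated bound $a\ge b\sqrt{n}$ is false in general (your example $A=\mathbb Z/n\mathbb Z$ suffices), and the paper's own Fourier computation, once the normalizations are tracked, only yields $|\hat\pi(0)|^2\ge b$ rather than $b\sqrt n$, hence $a^2\ge bn$ --- i.e.\ exactly your inequality. More importantly, you checked that this weaker (correct) bound still closes the proof of Theorem~\ref{tm:pq}: the paper's application squares to $(p-1)^2\ge (p-3)^2(p+q)$, which is more than needed, whereas your $(p-1)^2\ge (p-3)(p+q)$ already rearranges to $q(p-3)\le p+1$, impossible for $p,q\ge 5$ since then $(p+1)/(p-3)\le 3<5\le q$.
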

\begin{proof}
Let $\pi:\mathbb Z / n\mathbb Z\to \{ 0,1\}$ be the indicator function of $A$,  $\pi(x)=1$ if $x\in A$,
$\pi(x)=0$ otherwise. Let $\pi':\mathbb Z / n\mathbb Z\to \{ 0,1\}$ be the indicator function of $-A$, $\pi'(x)=\pi(-x)$. 

In terms of indicator functions, the hypothesis about the cardinality of intersections reads
$$ \forall y \in\mathbb Z / n\mathbb Z, \quad  \sum_{x\in \mathbb Z / n\mathbb Z} \pi(x)\pi(x+y) \geq b~, $$
or in other terms $(\pi *\pi')(-y)\geq b$. Taking the value at $0$ of the Fourier transform and using the fact that 
$\hat{\pi'}=\overline{\hat \pi}$, we find that
$$ |\hat \pi(0)|^2 \geq b\sqrt n~. $$
However, we also know that 
$$ \|\hat\pi\|^2 = \| \pi\|^2 = \sum_{x\in \mathbb Z / n\mathbb Z} \pi(x)^2 = a~. $$
The conclusion $b \sqrt n \leq a$ follows from the previous two relations.
\end{proof}

We now have all the elements needed to prove Theorem \ref{tm:pq}.

\begin{proof}[Proof of Theorem \ref{tm:pq}]
Consider a matrix $C_{p+q}^{circ}(pq)$, written in additive notation as $M$ with coefficients in $\mathbb Z/pq\mathbb Z$,
and put, with the notations of Lemma \ref{lm:cycles}, $A = P_{01} \subset {\mathbb Z}/(p+q){\mathbb Z}$. 
Note that, for all $i \in {\mathbb Z}/(p+q){\mathbb Z}$, the sets $P_{i,i+1}$ are exactly 
the circular rotations of $A$: $P_{i,i+1} = i+A$, and, by Corollary \ref{cr:ijkl}, 
we have $ \#(P_{01} \cap P_{i,i+1}) \geq p-3$.

From Proposition \ref{prop:intersection}, with $a=\#A=p-1$, $b=p-3$ and $n=p+q$, it follows that
$$ p-1\geq \sqrt{p+q}(p-3)~, $$
and, since $q\geq 5$, that
$$ (p-1)^2 \geq (p-3)^2 (p+5)~. $$
However a direct examination shows that 
$$ (x-3)^2(x+5) - (x-1)^2 $$
is strictly positive for $x\geq 5$, and a contradiction follows.
\end{proof}

\section{Counting and classification results for small matrices}
\label{sc:classification}

There are a number of known obstructions to the existence of a matrix in $C_n(l)$:

\begin{theorem}
We have the following results:
\begin{enumerate}
\item Lam-Leung: If $C_n(l)\neq\emptyset$ and $l=p_1^{a_1}\cdots p_k^{a_k}$ then $n\in p_1\mathbb N+\ldots+p_k\mathbb N$.

\item de Launey: If $C_n(l)\neq\emptyset$ then there is $d\in\mathbb Z[e^{2\pi i/l}]$ such that $|d|^2=n^n$.

\item Sylvester: If $C_n(2)\neq\emptyset$ then $n=2$ or $4|n$.

\item Sylvester': If $C_n(l)\neq\emptyset$ and $n=p+2$ with $p\geq 3$ prime, then $l\neq 2p^b$.

\item Sylvester'': If $C_n(l)\neq\emptyset$ and $n=2q$ with $p>q\geq 3$ primes, then $l\neq 2^ap^b$.

\item Haagerup: If $C_5(l)\neq\emptyset$ then $5|l$.
\end{enumerate}
\end{theorem}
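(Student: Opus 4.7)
The six assertions being of quite different flavors, I would treat them separately, working from the most immediate to the most intricate.

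The de Launey bound (2) is essentially a one-line observation: every entry of $H\in C_n(l)$ lies in the cyclotomic ring $\mathbb{Z}[\omega]$ with $\omega=e^{2\pi i/l}$, hence so does $d:=\det H$; and $HH^*=nI$ gives $|d|^2=\det(HH^*)=n^n$. The classical Sylvester obstruction (3) I would prove by the standard three-rows argument: normalize $H\in C_n(2)$ (with $n\geq 3$) by row/column sign flips so the first three rows start with $(+,+,+)$, count the four sign patterns these rows exhibit on the remaining $n-1$ columns, and invert the $3\times 4$ linear system imposed by pairwise orthogonality to conclude $4\mid n$.

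For the Lam-Leung obstruction (1) and Haagerup's (6), I would invoke the main theorem of \cite{lle}: every vanishing sum of $l$-th roots of unity, with $l=p_1^{a_1}\cdots p_k^{a_k}$, is a non-negative integer combination of elementary vanishing $p_i$-th-root sums. Orthogonality of any non-constant row of $H$ with the constant row produces a vanishing sum of $n$ $l$-th roots of unity, hence $n\in p_1\mathbb{N}+\cdots+p_k\mathbb{N}$, proving (1). For (6) I would specialize to $n=5$: by Lam-Leung, $5\in\sum_i p_i\mathbb{N}$ forces either $5\in\{p_i\}$ or the decomposition $5=2+3$, and the latter is ruled out by a direct inspection of the possible five-term vanishing relations in $\mathbb{Z}[e^{2\pi i/6}]$, which is Haagerup's input.

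The Sylvester$'$ and Sylvester$''$ obstructions (4) and (5) are the genuinely hard cases, and here I would follow the strategy of \cite{bbs}, namely combining (1) and (2). The Lam-Leung decomposition of the row-orthogonality sum, together with $n=p+2$ (resp.\ $n=2q$) and $l=2p^b$ (resp.\ $l=2^ap^b$), severely restricts how the $n$ entries partition into $2$-blocks and $p$-blocks; the de Launey element $d\in\mathbb{Z}[e^{2\pi i/l}]$ with $|d|^2=n^n$ must then split compatibly with the ramification of the primes $2$ and $p$ in this cyclotomic ring, and a valuation argument at a prime above $p$ produces an arithmetic incompatibility with the rational factorization of $n=p+2$ (resp.\ $n=2q$). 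The main obstacle is precisely this final valuation bookkeeping, which is the technical heart of \cite{bbs}; by comparison the other five items are short, once the combinatorial Lam-Leung tool and the algebraic de Launey tool are on the table.
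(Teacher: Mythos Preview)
The paper does not actually prove this theorem: its entire proof is the single line ``See respectively \cite{lle}, \cite{lau}, \cite{syl}, \cite{bbs}, \cite{bbs}, \cite{ha1}.'' In that sense your proposal is consistent with the paper's approach---both defer to the literature---and your sketches of (1), (2), (3) are the standard correct arguments, going well beyond what the paper supplies.

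Two points where your sketches misrepresent the cited proofs. For (6), your plan is to use Lam--Leung to reduce to the residual case $5=2+3$ and then invoke ``a direct inspection of the possible five-term vanishing relations in $\mathbb Z[e^{2\pi i/6}]$'', calling this ``Haagerup's input''. That is not what Haagerup does: the result in \cite{ha1} is the rigidity statement that \emph{every} matrix in $C_5(\infty)$ is equivalent to $F_5$, proved by a direct parametrization over $\mathbb C$, from which $5\mid l$ is an immediate corollary. The residual case after Lam--Leung is not $l=6$ but any $l$ with $6\mid l$ and $5\nmid l$, and ruling it out requires the full Hadamard structure (how the $2+3$ decompositions of all $\binom{5}{2}$ orthogonality relations fit together), not a single vanishing-sum inspection. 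For (4) and (5), you describe the argument in \cite{bbs} as combining the Lam--Leung block structure with the de~Launey determinant and ``a valuation argument at a prime above $p$''. The proof in \cite{bbs} is in fact purely combinatorial: it analyses how the forced $2$-block/$p$-block decompositions of the pairwise orthogonality relations must interact across the rows of the matrix, and derives a contradiction from that interaction, without ever invoking $\det H$ or any $p$-adic valuation. So your outline correctly locates where the real work lies, but mischaracterizes the actual content of the references the paper is citing.
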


\begin{proof}
See respectively \cite{lle}, \cite{lau}, \cite{syl}, \cite{bbs}, \cite{bbs}, \cite{ha1}.
\end{proof}

In addition, there is a less explicit generalized Turyn obstruction, see \cite[Prop. 2.4]{bns}.

\begin{theorem}
If $C_5(l)\neq\emptyset$ then there exists $(a_1, a_2, \cdots, a_l)\in {\mathbb N}^l$ such that 
$\sum_i a_i=n$ and that 
$$ \sum_{i,k} a_i a_{i+k} w^k=n~, $$
where $w=e^{2\pi i/l}$.
\end{theorem}

{\small\begin{center}
\begin{table}
\begin{tabular}[t]{|r|c|c|c|c|c|c|c|c|c|c|c|c|c|c|c|}
\hline $n\backslash l$\!
&2&3&4&5&6&7&8&9&10&11&12&13&14&15\\
\hline 2&$\times_t$&$\times$&0&$\times$&$\times_t$&$\times$&0&$\times$&$\times_t$&$\times$&0&$\times$&$\times_t$&$\times$\\
\hline 3&$\times$&$F_3$&$\times$&$\times$&$(F_3)$&$\times$&$\times$&$(F_3)$&$\times$&$\times$&$(F_3)$&$\times$&$\times$&$(F_3)$\\
\hline 4&1&$\times$&2&$\times$&4&$\times$&5&$\times$&7&$\times$&8&$\times$&10&$\times$\\
\hline 5&$\times$&$\times$&$\times$&$F_5$&$\times_{pq}$&$\times$&$\times$&$\times$&$(F_5)$&$\times$& $\times_h$&$\times$&$\times$&$(F_5)$\\
\hline 6&$\times_s$&$\times_t$&$\times_t$&$\times$&$\times_t$&$\times$&0&$\times_t$&$\times_{s}$&$\times$&3&$\times$&$\times_{s}$&$\times_t$\\
\hline 7&$\times$&$\times$&$\times$&$\times$&0&$F_7$&$\times$&$\times$&$\times_{pq}$&$\times$&0&$\times$&$(F_7)$&$\times$\\
\hline 8&$\times_t$&$\times$&4&$\times$&$\times_t$&$\times$&9&$\times$&$\times_t$&$\times$&14&$\times$&0& $\times_{pq}$\\
\hline 9&$\times$&6&$\times$&$\times$&24&$\times$&$\times$&62&0&$\times$&108&$\times$&$\times_{s}$&172\\
\hline 10\!&$\times_s$&$\times$&0&$\times_t$&$\times_t$&$\times$&0&$\times$&$\times_t$&$\times$&0&$\times$&$\times$&0\\
\hline 11\!& $\times$ & $\times$ & $\times$ & $\times$ &$\times_t$&$\times$& $\times$ & $\times$ &0&$F_{11}$&$\times_t$& $\times$&0&0 \\
\hline 12\!&$\times_t$&0&$\times_t$& $\times$ &37&$\times$&0&0&$\times_t$& $\times$ && $\times$&$\times_t$& \\
\hline 13\!& $\times$ & $\times$ & $\times$ & $\times$ &0&$\times$& $\times$ & $\times$ &$\times_t$& $\times$&&$F_{13}$&$\times_t$& \\
\hline 14\!& $\times_s$ & $\times$ &$\times_t$& $\times$ &$\times_t$&0&$\times_t$& $\times$ &$\times_t$& $\times$&& $\times$&& \\ 
\hline 15\!& $\times$ & $\times_t$&$\times$ &$\times_t$&$\times_t$&$\times$ &$\times$ & $\times$ &$\times_t$& $\times$&& $\times$&$\times_t$& \\
\hline
\end{tabular}
\vskip4mm
\caption{{\em Existence and number of circulant Butson matrices}. Here $\times$ is the Lam-Leung obstruction, $\times_l,\times_h,\times_s$ are the de Launey, Haagerup and Sylvester obstructions, 
$\times_{pq}$ denote the Sylvester' and Sylvester'' obstructions, and $\times_t$ denotes the
generalized Turyn obstruction.}
\label{tb:table}
\end{table}
\end{center}}

Table \ref{tb:table} above describes for each $n,l$ either an obstruction to the existence of matrices in $C_n^{circ}(l)$, or ``0'' if no obstruction is known but there is no such matrix, or the number of equivalence classes of those matrices. The blank cells indicate that we have no results. 
Note that for many cells several different obstructions exist, in which case we chose one of the
possible obstructions.

The symbol ``$F_p$'' is used when $n=l$ is prime, so that Theorem \ref{tm:uniqueness-p} indicates that the only circulant Hadamard matrix is the Fourier matrix or one of its multiples (up to equivalence). The symbol
``$(F_p)$'' was used to indicate that the same uniqueness result applies but when $n$ is prime and  $l$ is a multiple of $n$, so that Theorem \ref{tm:uniqueness-p} does not apply directly. In those cases the uniqueness which is claimed results from computations.

A list, for small values of $n,l$ for which no obstruction is known, of the first rows of matrices in 
$C_n^{circ}(l)$, is available in an online appendix to the paper\footnote{\href{http://math.uni.lu/schlenker/programs/circbut/circbut.html}{http://math.uni.lu/schlenker/programs/circbut/circbut.html}}.

The classification results were obtained by using a computer program, which is also available in the online 
appendix. We stress that the search for matrices in $C^{circ}_n(l)$ cannot be done, except for very small
values of $n,l$, by a brutal search algorithm. The program used here, and presented in the companion 
webpage of the article, works for larger values of $n,l$ because it takes into account in a relatively 
efficient way both the structure of those matrices and the action of the symmetry group leaving those 
matrices invariant. We do not provide a detailed analysis of the way the algorithm works here, since the
program should be relatively easy to understand for specialists of complex Hadamard matrices.

\end{document}